\topmargin -1.5cm \textwidth 6in \textheight 8.5in
\documentclass[11pt]{amsart}
\theoremstyle{plain}
\newtheorem{Thm}{Theorem}

\newtheorem{Lem}[Thm]{Lemma}

\errorcontextlines=0

\begin{document}

\title[Q-curvature flow]
{Q-curvature flow with indefinite nonlinearity}

\author{Li MA}
\address{Li Ma, Department of mathematical sciences \\
Tsinghua University \\
Beijing 100084 \\
China} \email{lma@math.tsinghua.edu.cn}\dedicatory{}

\date{July 12th, 2008}

\begin{abstract}

In this note, we study Q-curvature flow on $S^4$ with indefinite
nonlinearity. Our result is that the prescribed Q-curvature problem
on $S^4$ has a solution provided the prescribed Q-curvature $f$ has
its positive part, which possesses non-degenerate critical points
such that $\Delta_{S^4} f\not=0$ at the saddle points and an extra
condition such as a nontrivial degree counting condition.

{ \textbf{Mathematics Subject Classification 2000}: 53Cxx,35Jxx}

{ \textbf{Keywords}: Q-curvature flow, indefinite nonlinearity,
blow-up, conformal class}
\end{abstract}

\thanks{$^*$ The research is partially supported by the National Natural Science
Foundation of China 10631020 and SRFDP 20060003002 }
 \maketitle

\section{Introduction}

Following the works of A.Chang-P.Yang \cite{chang}, M.Brendle
\cite{brandle}, Malchiodi and M.Struwe \cite{struwe}, we study a
heat flow method to the prescribed Q-curvature problem on $S^4$.
Given the Riemannian metric $g$ in the conformal class of standard
metric $c$ on $S^4$ with Q-curvature $Q_g$. Then it is the
well-known that
$$
Q_g=-\frac{1}{12}(\Delta_gR_g-R_g^2+3|Rc(g)|^2):=Q,
$$
where $R_g$, $Rc(g)$, $\Delta_g$ are the scalar curvature, Ricci
curvature tensor, the Laplacian operator of the metric $g$
respectively.

Recall the Chern-Gaussian-Bonnet formula on $S^4$ is
$$
\int_{S^4} Q_gdv_g=8\pi^2,
$$
Hence,we know that $Q_g$ has to be positive somewhere. This gives
a necessary condition for the prescribed Q-curvature problem on
$S^4$. Assuming the prescribed curvature function $f$ being
positive on $S^4$, the heat flow for the Q-curvature problem is a
family of metrics of the form $g=e^{2u(x,t)}c$ satisfying
\begin{equation}\label{flow}
 u_t=\alpha f-Q, \quad x\in S^4,
\quad t>0,
\end{equation}
where $u:S^4\times (0,T)\to R$, and $\alpha=\alpha(t)$ is defined
by
\begin{equation}\label{alpha}
\alpha \int_{S^4} fdv_g=8\pi^2. \end{equation}
 Here $dv_g$ is the
area element with respect to the metric $g$. It is easy to see
that
$$
\alpha_t\int_{S^4}fdv_g=2\alpha \int_{S^4}(Q-\alpha f)fdv_g.
$$
 Malchiodi and M.Struwe \cite{struwe} can
show that the flow exists globally, furthermore, the flow converges
at infinity provided $f$ possesses non-degenerate critical points
such that $\Delta_{S^4} f\not=0$ at the saddle points with the
condition $$\sum_{\{p:\nabla f(p)=0;
\Delta_{S^4}f(p)<0\}}(-1)^{ind(f,p)}\not=0.$$
 Here $\Delta_{S^4}:=\Delta$ is the
Analyst's Laplacian on the standard 4-sphere $(S^4,c)$. Recall
that $\int_{S^4}dv_c=\frac{8}{3}\pi^2$. The purpose of this paper
is to relax their assumption by allowing the function $f$ to have
sign-changing or to have zeros.

Since we have
$$
Q=\frac{1}{2}e^{-4u}(\Delta^2
u-div((\frac{2}{3}R(c)c-2Rc(c))du)+6),
$$
the equation (\ref{flow}) define a nonlinear parabolic equation
for $u$, and the flow exists at least locally for any initial data
$u|_{t=0}=u_0$. Clearly, we have $$
\partial_t\int_{S^4}dv_g=2\int{S^4}u_tdv_g=0.
$$
We shall assume that the initial data $u_0$ satisfies the
condition
\begin{equation}\label{positive}
\int f e^{4u}dv_c>0.
\end{equation}
We shall show that this property is preserved along the flow. It
is easy to compute that
\begin{equation}\label{curvature}
Q_t=-4u_tQ-\frac{1}{2}Pu_t=4Q(Q-\alpha f)+P(\alpha f-Q),
\end{equation}
where $P=P_g=e^{-4u}P_c$ and $P_c$ is the Paneitz operator in the
metric $c$ on $S^4$ \cite{chang}. Using (\ref{curvature}), we can
compute the growth rate of the Calabi-type energy
$\int_{S^4}|Q-\alpha f|^2dv_g$.

Our main result is following
\begin{Thm}\label{main}
Let $f$ be a positive somewhere, smooth function on $S^4$ with only
non-degenerate critical points on the its positive part $f_+$ with
its Morse index $ind(f_+,p)$. Suppose that at each critical point
$p$ of $f_+$, we have $\Delta f\not=0$. Let $m_i$ be the number of
critical points with $f(p)>0$, $\Delta _{S^4}f(p)<0$ and
$ind(f,p)=4-i$. Suppose that there is no solutions with coefficients
$k_i\geq 0$ to the system of equations
$$
m_0=1+k_0, m_i=k_{i-1}+k_i, 1\leq i\leq 4, k_4=0.
$$

 Then $f$ is the Q curvature
of the conformal metric $g=e^{2u}c$ on $S^4$.
\end{Thm}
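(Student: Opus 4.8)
The plan is to adapt the Malchiodi–Struwe heat-flow argument to the indefinite setting by restricting attention to the behaviour of $f$ on its positive part. First I would establish the basic a priori estimates along the flow \eqref{flow}–\eqref{alpha}: conservation of the total area $\int_{S^4}dv_g$, preservation of the positivity condition \eqref{positive} (so that $\alpha(t)$ stays positive and bounded), and monotonicity of a suitable functional — the natural candidate being the Liouville-type energy $E(u)=\frac12\langle P_cu,u\rangle + 2\int_{S^4}Q_c u\,dv_c - 2\alpha^{-1}\cdot 8\pi^2\log\left(\frac{1}{8\pi^2}\int_{S^4}fe^{4u}dv_c\right)$ up to the usual constants, whose derivative along the flow is $-\int_{S^4}|Q-\alpha f|^2dv_g\le 0$ modulo the $\alpha_t$ terms computed in the excerpt. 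From \eqref{curvature} one then controls the Calabi-type energy $\int_{S^4}|Q-\alpha f|^2dv_g$ and shows it tends to zero along a subsequence $t_k\to\infty$.

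Next I would run the concentration–compactness dichotomy for the normalized metrics $g(t_k)$. Either the flow converges (in which case the limit $u_\infty$ solves $Q_{g_\infty}=\alpha_\infty f$, and a Kazdan–Warner / Pohozaev-type identity forces $\alpha_\infty=1$ after possibly composing with a conformal diffeomorphism, giving the desired solution), or the metric concentrates at a single point $p\in S^4$ — standard for the Paneitz/$Q$-curvature problem, concentration is at exactly one point with the right mass $8\pi^2$. In the concentration case, the blow-up point $p$ must be a critical point of $f$ at which $f(p)>0$; this is where the indefiniteness is handled, since away from $\{f>0\}$ the factor $\alpha f$ cannot supply the concentrating mass. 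A finer blow-up analysis (expanding $f$ to second order at $p$ and using $\Delta_{S^4}f(p)\neq 0$) shows the concentration can only happen at points with $\Delta_{S^4}f(p)<0$, exactly as in \cite{struwe}.

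The core of the argument is then a Morse-theoretic / degree-counting obstruction. Associated to the flow is a finite-dimensional "barycenter" map or, equivalently, a Lyapunov–Schmidt reduction whose critical points at infinity are indexed by the concentration points $p$ with $f(p)>0$, $\Delta_{S^4}f(p)<0$, each contributing according to $ind(f,p)$; the relevant count is encoded in the numbers $m_i$. The claim is that if the flow does \emph{not} produce a genuine solution, the topology of the space of admissible concentrations must be "filled in" by these critical points at infinity, which algebraically amounts to the existence of nonnegative integers $k_i$ solving $m_0=1+k_0$, $m_i=k_{i-1}+k_i$ for $1\le i\le 4$, $k_4=0$ — this is precisely a Morse-inequality / Euler-characteristic bookkeeping for a cell complex built from the $S^4$ of concentration points (the "$1+$" reflecting the class carried by $S^4$ itself, i.e.\ the degree of the Paneitz map). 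Hence, under the hypothesis that no such $k_i$ exist, the concentration alternative is impossible and the flow must converge to a solution.

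I expect the main obstacle to be making the degree-counting step rigorous: one must carefully set up the variational or flow-theoretic framework (e.g.\ the negative gradient flow of $E$ on the space $\{u:\int fe^{4u}>0\}$), identify the critical points at infinity with the correct Morse indices — the shift by the dimension of the concentration parameter and the contribution of $\Delta_{S^4}f$ to the index must be tracked exactly — and verify that the homological/degree identity collapses to the stated linear system. The indefiniteness of $f$ complicates the functional-analytic setup because $\int fe^{4u}$ need not be coercive and can change sign, so one needs the preserved positivity \eqref{positive} to stay inside the good region, and one must check the blow-up cannot occur on $\{f\le 0\}$ nor at critical points of $f_+$ lying on the boundary $\{f=0\}$; handling that boundary behaviour of $f_+$ carefully is the delicate technical point that distinguishes this from the classical case.
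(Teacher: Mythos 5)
Your overall route coincides with the paper's: the flow \eqref{flow}--\eqref{alpha}, the monotone energy $E_f(u)=E(u)-3\log(\int_{S^4}fe^{4u}dc)$, the decay of the Calabi-type energy $\int_{S^4}|\alpha f-Q|^2dv_g$ along the normalized flow, the concentration--compactness alternative, and finally the finite-dimensional dynamics of Malchiodi--Struwe whose Morse-theoretic bookkeeping produces exactly the linear system in the $m_i,k_i$. The one genuine gap is the step that is in fact the paper's only new ingredient beyond \cite{struwe}: ruling out concentration at a point $q$ with $f(q)\le 0$. Your proposed mechanism --- that ``away from $\{f>0\}$ the factor $\alpha f$ cannot supply the concentrating mass,'' i.e.\ nonexistence of entire solutions of the limiting Liouville-type equation with nonpositive coefficient --- handles $f(q)<0$ but fails at the borderline $f(q)=0$: there the blow-up limit satisfies $\Delta_{R^4}^2\hat u_\infty=0$ on $R^4$, which does admit admissible radial profiles $\bar{u}=A+Br^2$ with $B<0$, and the paper explicitly remarks that this case ``seems hard to exclude'' by Liouville-type nonexistence alone. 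You correctly flag the boundary $\{f=0\}$ as the delicate point, but you leave it unresolved, and that is precisely where an argument is required.

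The fix is quantitative, and you already hold both ingredients without combining them: monotonicity gives $E_f(u(t))\le E_f(u_0)$, while Beckner's inequality \eqref{onof} gives $E(u)\ge\log\int_{S^4}e^{4u}dc=0$; together these yield the uniform positive lower bound $\int_{S^4}fe^{4u}dc\ge e^{-E_f(u_0)}$ for all $t$ (this is \eqref{bound}, and it is also what produces the two-sided control \eqref{struwe2} of $\alpha(t)$ replacing (26) of \cite{struwe}). Since concentration means $dv_{g_l}\to\frac{8}{3}\pi^2\delta_q$ weakly in measures, one gets $\int_{S^4}fe^{4u_l}dc\to f(q)$, whence $f(q)\ge e^{-E_f(u_0)}>0$ strictly --- disposing of the cases $f(q)<0$ and $f(q)=0$ in one stroke. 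Two smaller points: your candidate energy with the time-dependent coefficient $\alpha^{-1}$ in front of the logarithm would spoil the clean identity \eqref{monotone} (the correct functional carries a fixed constant), and no Kazdan--Warner identity is needed to normalize $\alpha_\infty$ --- the constant conformal shift $u\mapsto u+\frac14\log\alpha_\infty$ turns $Q_{g_\infty}=\alpha_\infty f$ into $Q=f$.
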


Note that this result is an extension of the famous result of
Malchiodi-Struwe \cite{struwe} where only positive $f$ has been
considered. A similar result for Curvature flow to Nirenberg problem
on $S^2$ has been obtained in \cite{ma2}. See also J.Wei and X.Xu's
work \cite{Wx}.

 For
simplifying notations, we shall use the conventions that
$dc=\frac{dv_c}{\frac{8}{3}\pi^2}$ and $\bar{u}=\bar{u}(t)$
defined by
$$
\int_{S^4}(u-\bar{u})dv_c=0.
$$

\section{Basic properties of the flow}

Recall the following result of Beckner \cite{ono} that
\begin{equation}\label{onof}
\int_{S^4}(|\Delta u|^2+2|\nabla u|^2+12u)dc\geq \log(\int_{S^4}
e^{4u}dc)=0,
\end{equation}
where $|\nabla u|^2$ is the norm of the gradient of the function
$u$ with respect to the standard metric $c$. Here we have used the
fact that $\int_{S^4} e^{4u}dc=1$ along the flow (\ref{flow}).

We show that this condition is preserved along the flow
(\ref{flow}). In fact, letting
$$
E(u)=\int_{S^4}(uPu+4Q_cu)dc=\int_{S^4}(|\Delta u|^2_c+2|\nabla
u|_c^2+12u)dc
$$
be the Liouville energy of $u$ and letting
$$
E_f(u)=E(u)-3\log (\int_{S^4}fe^{4u}dc)
$$
be the energy function for the flow (\ref{flow}), we then compute
that
\begin{equation}\label{monotone}
\partial_tE_f(u)=-\frac{3}{2\pi^2}\int_{S^4}|\alpha f-Q|^2dv_g\leq 0.
\end{equation}
One may see Lemma 2.1 in \cite{struwe} for a proof. Hence
$$
E_f(u(t))\leq E_f(u_0), \quad t>0.
$$
After using the inequality (\ref{onof}) we have
\begin{equation}\label{bound}
\log(1/\int_{S^4}fe^{4u}dc) \leq E_f(u_0),
\end{equation}
which implies that $\int_{S^4} fe^{4u}dv_c>0$ and $$
e^{E_f(u_0)}\int_{S^4}e^{4u}dc\leq \int_{S^4}fe^{4u}dc.
$$
 Note also that
$\int_{S^4}fe^{4u}dc=1/\alpha(t)$. Hence,
$$
\alpha(t)\leq \frac{1}{e^{E_f(u_0)}}.
$$

Using the definition of $\alpha(t)$ we have
$$
\alpha(t)\geq \frac{1}{\max_{S^4}f}.
$$
We then conclude that $\alpha(t)$ is uniformly bounded along the
flow, i.e.,
\begin{equation}\label{struwe2}
\frac{1}{\max_{S^4}f}\leq\alpha(t)\leq \frac{1}{e^{E_f(u_0)}}.
\end{equation}
We shall use this inequality to replace (26) in \cite{struwe} in
the study of the normalized flow, which will be defined soon
following the work of Machiodi and M.Struwe \cite{struwe}. If we
have a global flow, then using (\ref{monotone}) we have
$$
2\int_0^{\infty}dt\int_{S^4} |\alpha f-Q|^2dv_g\leq 4\pi
(E_f(u_0)+\log max_{S^4} f).
$$

Hence we have a suitable sequence $t_l\to\infty$ with associated
metrics $g_l=g(t_l)$ and $\alpha (t_l)\to \alpha>0$, and letting
$Q_l=Q(g_l)$ be the Q-curvature of the metric $g_l$, such that
$$
\int_{S^4}|Q_l-\alpha f|^2\to 0, \; \; (t_l\to\infty).
$$
Therefore, once we have a limiting metric $g_{\infty}$ of the
sequence of the metrics $g_l$, it follows that
$Q(g_{\infty})=\alpha f$. After a re-scaling, we see that $f$ is
the Gaussian curvature of the metric $\beta g_{\infty}$ for some
$\beta>0$, which implies our Theorem \ref{main}.

\section{Normalized flow and the proof of Theorem \ref{main}}

We now introduce a normalized flow. For the given flow
$g(t)=e^{2u(t)}c$ on $S^4$, there exists a family of conformal
diffeomorphisms $\phi=\phi(t):S^4\to S^4$, which depends smoothly
on the time variable $t$, such that for the metrics $h=\phi^*g$,
we have
$$
\int_{S^4} x dv_h=0, \; for \; all \; t\geq 0.
$$
Here $x=(x^1,x^2,x^3, x^4, x^5)\in S^4\subset R^5$ is a position
vector of the standard 4-sphere. Let
$$
v=u\circ \phi+\frac{1}{4}\log(det(d\phi)).
$$
Then we have $h=e^{2v}c$. Using the conformal invariance of the
Liouville energy \cite{chang}, we have
$$
E(v)=E(u),
$$
and furthermore,
$$
Vol(S^4,h)=Vol(S^4,g)=\frac{8}{3}\pi^2, \; for \; all \; t\geq 0.
$$

Assume $u(t)$ satisfies (\ref{flow}) and (\ref{alpha}). Then we
have the uniform energy bounds
$$
0\leq E(v)\leq E(u)=E_f(u)+\log (\int_{S^4}fe^{4u}dc)\leq
E_f(u_0)+\log(\max_{S^4}f).
$$

Using Jensen's inequality we have
$$
2\bar{v}:=\int_{S^4} 2v dc\leq \log(\int_{S^4} e^{4v}dc)=0.
$$

Using this we can obtain the uniform $H^1$ norm bounds of $v$ for
all $t\geq 0$ that
$$
\sup_t|v(t)|_{H^1(S^2)}\leq C.
$$
See the proof of Lemma 3.2 in \cite{struwe}. Using the
Aubin-Moser-Trudinger inequality \cite{Au98} we further have
$$
4\sup_{\{0\leq t<T\}}\int_{S^4}|u(t)|dc\leq \sup_t\int_{S^4}
e^{4|u(t)|}dc\leq C<\infty.
$$

Note that
$$
v_t=u_t\circ \phi+\frac{1}{4} e^{-4v}div_{S^4}(\xi e^{4v})
$$
where $\xi=(d\phi)^{-1}\phi_t$ is the vector field on $S^2$
generating the flow $(\phi(t))$, $t\geq 0$, as in \cite{struwe},
formula (17), with the uniform bound
$$
|\xi|^2_{L^{\infty}(S^4)}\leq C\int_{S^4}|\alpha f-K|^2dv_g.
$$
With the help of this bound, we can show (see Lemma 3.3 in
\cite{struwe}) that for any $T>0$, it holds
$$
\sup_{0\leq t<T}\int_{S^2}e^{4|u(t)|}dc<+\infty.
$$
Following the method of Malchiodi and M.Struwe \cite{struwe} (see
also Lemma 3.4 in  \cite{struwe1}) and using the bound
(\ref{struwe2}) and the growth rate of $\alpha$, we can show that
$$
\int_{S^4}|\alpha f-Q|^2dv_g\to 0
$$
as $t\to\infty$. Once getting this curvature decay estimate, we
can come to consider the concentration behavior of the metrics
$g(t)$. Following \cite{struwe1}, we show that

\begin{Lem} Let $(u_l)$ be a sequence of smooth functions on $S^4$
with associated metrics $g_l=e^{2u_l}c$ with
$vol(S^4,g_l)=\frac{8}{3}\pi^2$, $l=1,2,...$ as constructed above.
Suppose that there is a smooth function $Q_{\infty}$, which is
positive somewhere in $S^4$ such that
$$
|Q(g_l)-Q_{\infty}|_{L^2(S^4,g_l)}\to 0
$$
as $l\to\infty$. Let $h_l=\phi_l^*g_l=e^{2v_l}c$ be defined as
before. Then we have either

 1) for a subsequence $l\to\infty$ we have $u_l\to u_{\infty}$ in
 $H^4(S^4, c)$, where $g_{\infty}=e^{2u_{\infty}}c$ has Q-
 curvature $Q_{\infty}$, or

 2) there exists a subsequence, still denoted by  $(u_l)$ and a point $q\in S^4$ with $Q_{\infty}(q)>0$,
  such that the metrics $g_l$ has a measure concentration that
  $$
dv_{g_l}\to \frac{8}{3}\pi^2 \delta_q
  $$
weakly in the sense of measures, while $h_l\to c$ in $H^4(S^4,c)$
and in particular, $Q(h_l)\to 3$ in $L^2(S^4)$. Moreover, in the
latter case the conformal diffeomorphisms $\phi_l$ weakly
converges in $H^2(S^4)$ to the constant map $\phi_{\infty}=q$.
\end{Lem}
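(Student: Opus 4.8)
The plan is to run the standard concentration--compactness argument for conformal metrics on $S^4$, in the form used by Malchiodi--Struwe; the point of the conformal normalization $h_l=\phi_l^*g_l=e^{2v_l}c$ with $\int_{S^4}x\,dv_{h_l}=0$ is precisely to make this argument work. First I would record uniform estimates on $v_l$. By conformal invariance of the Liouville energy, $E(v_l)=E(u_l)\le C$, and Jensen's inequality gives $\bar v_l\le0$. Because each measure $e^{4v_l}\,dv_c$ has vanishing center of mass, the sharpened Adams--Moser--Trudinger (Beckner) inequality of Aubin--Chang--Yang type applies -- its quadratic term carries a constant strictly smaller than in Beckner's inequality -- and, combined with the energy bound and $\bar v_l\le0$, it yields $|\bar v_l|\le C$, hence $\|v_l\|_{H^2(S^4,c)}\le C$, and then (Adams' inequality) $\int_{S^4}e^{p|v_l|}\,dv_c\le C(p)$ for every $p<\infty$. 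Passing to a subsequence, $v_l\rightharpoonup v_\infty$ in $H^2$, strongly in every $L^p$ and a.e., $e^{4v_l}\to e^{4v_\infty}$ in $L^q(dv_c)$ for every $q<\infty$; in particular $\int_{S^4}e^{4v_\infty}\,dc=1$ and the center of mass of $e^{4v_\infty}\,dv_c$ is again $0$. Finally, the hypothesis $|Q(g_l)-Q_\infty|_{L^2(S^4,g_l)}\to0$ says $(Q(g_l)-Q_\infty)e^{2u_l}\to0$ in $L^2(S^4,dv_c)$; pulling back by $\phi_l$ and using $Q(h_l)=Q(g_l)\circ\phi_l$, it equivalently reads $(Q(h_l)-Q_\infty\circ\phi_l)e^{2v_l}\to0$ in $L^2(S^4,dv_c)$.

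Next I would split according to the behaviour of $\phi_l$ in the conformal group of $S^4$: along a subsequence, either (a) $\phi_l\to\phi_\infty$ in $C^\infty(S^4)$ for some conformal diffeomorphism $\phi_\infty$, or (b) $\phi_l$ degenerates, i.e.\ there are $q,q'\in S^4$ with $\phi_l\to q$ uniformly on $S^4\setminus B_\delta(q')$ and $\det(d\phi_l)\to0$ uniformly there for every $\delta>0$, while $\phi_l\rightharpoonup q$ weakly in $H^2(S^4)$. In case (a), $u_l$ is obtained from $v_l$ by a conformal change bounded in $C^\infty$, so $\|u_l\|_{H^2}\le C$, and along a subsequence $u_l\rightharpoonup u_\infty$ in $H^2$ with $e^{4u_l}\to e^{4u_\infty}$ in every $L^q$. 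Multiplying $(Q(g_l)-Q_\infty)e^{2u_l}\to0$ in $L^2$ by $e^{2u_l}$ gives $(Q(g_l)-Q_\infty)e^{4u_l}\to0$ in $L^r(dv_c)$ for every $r<2$, hence $Q(g_l)e^{4u_l}\to Q_\infty e^{4u_\infty}$ in $L^r$; passing to the limit in $P_cu_l+6=2Q(g_l)e^{4u_l}$ then yields $P_cu_\infty+6=2Q_\infty e^{4u_\infty}$ in the distributional sense. A standard elliptic bootstrap for the Paneitz operator $P_c$ (a nonnegative fourth order operator whose kernel is the constants), using $e^{4u_\infty}\in L^p$ for all $p$ and the smoothness of $Q_\infty$, gives $u_\infty\in C^\infty$, so $g_\infty=e^{2u_\infty}c$ has $Q$-curvature $Q_\infty$; applying the same estimates to $P_c(u_l-u_\infty)$, together with the volume constraint $\int_{S^4}e^{4u_l}\,dc=1$, which forces $\bar u_l\to\bar u_\infty$, upgrades the convergence to $u_l\to u_\infty$ in $H^4(S^4,c)$. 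This is alternative 1).

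In case (b) I would first read off the measure concentration: $dv_{g_l}=(\phi_l)_*\,dv_{h_l}$, so for every $\psi\in C^0(S^4)$ one has $\int_{S^4}\psi\,dv_{g_l}=\int_{S^4}(\psi\circ\phi_l)e^{4v_l}\,dv_c$, and since $\phi_l\to q$ a.e., $e^{4v_l}\to e^{4v_\infty}$ in $L^1$ and $\int_{S^4}e^{4v_\infty}\,dv_c=\frac{8}{3}\pi^2$, this gives $dv_{g_l}\to\frac{8}{3}\pi^2\,\delta_q$ weakly in the sense of measures. To identify $v_\infty$, pass to the limit in $P_cv_l+6=2Q(h_l)e^{4v_l}$: since $Q_\infty\circ\phi_l\to Q_\infty(q)$ a.e.\ and boundedly and $(Q(h_l)-Q_\infty\circ\phi_l)e^{2v_l}\to0$ in $L^2$, one gets $Q(h_l)e^{4v_l}\to Q_\infty(q)\,e^{4v_\infty}$ in $L^1$, hence $P_cv_\infty+6=2Q_\infty(q)e^{4v_\infty}$. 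Integrating over $(S^4,c)$ and using $\int_{S^4}e^{4v_\infty}\,dc=1$ forces $Q_\infty(q)=3>0$, so $q$ is an admissible concentration point; the equation then reads $P_cv_\infty+6=6e^{4v_\infty}$, so by the Chang--Yang classification of constant $Q$-curvature metrics $e^{2v_\infty}c$ is a pull-back of $c$ under a conformal diffeomorphism, and the vanishing center of mass of $e^{4v_\infty}\,dv_c$ forces $v_\infty\equiv0$. A bootstrap as in case (a), now driven by $Q(h_l)e^{4v_l}\to3$, gives $v_l\to0$ in $H^4(S^4,c)$, i.e.\ $h_l\to c$ in $H^4$ and $Q(h_l)\to Q(c)=3$ in $L^2(S^4)$; together with $\phi_l\rightharpoonup q$ in $H^2$ this is alternative 2).

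I expect the real obstacle to be the first step: establishing and exploiting the improved Adams--Moser--Trudinger inequality under the vanishing-center-of-mass constraint, since it is exactly this that rules out concentration of the normalized conformal factors $v_l$ and supplies the high integrability of $e^{4v_l}$ on which every subsequent limit passage and elliptic bootstrap rests. The remaining delicate points are essentially bookkeeping: reconciling the $L^2(S^4,g_l)$ convergence of $Q(g_l)$ with the pointwise degeneration $Q_\infty\circ\phi_l\to Q_\infty(q)$ in case (b), and invoking the classification of constant $Q$-curvature metrics on $S^4$ together with the rigidity furnished by the center-of-mass normalization; these are routine once the first step is in hand.
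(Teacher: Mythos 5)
Your argument is essentially correct, but for the concentration alternative it takes a genuinely different route from the paper. The paper follows Malchiodi--Struwe's Lemma 3.6 verbatim: it selects concentration points $q_l$ and radii $r_l$ with $\int_{B(q_l,r_l)}|Q(g_l)|dv_{g_l}=2\pi^2$, rescales by conformal maps to $R^4$, obtains a limiting Liouville-type equation $-\Delta^2_{R^4}\hat u_\infty=\hat Q_\infty(q)e^{4\hat u_\infty}$ with finite volume, and then --- this is the one step the paper adds to \cite{struwe} --- rules out $Q_\infty(q)\le 0$ at the concentration point by invoking the flow-derived bound (\ref{bound}), $\log(1/\int f e^{4u}dc)\le E_f(u_0)$, which forces $f(q)>0$ where the volume concentrates. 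You instead never leave $S^4$: you use the improved Aubin/Chang--Yang inequality under the vanishing center-of-mass constraint (which both arguments need; it is the content of Lemma 3.2 of \cite{struwe}, quoted before the Lemma in this paper) to get uniform integrability of $e^{4v_l}$ and hence $\int e^{4v_\infty}dc=1$, pass to the limit in $P_cv_l+6=2Q(h_l)e^{4v_l}$, and read off $Q_\infty(q)=3>0$ by integrating the limit equation. This is a real difference with a real payoff: your proof uses only the hypotheses actually stated in the Lemma, whereas the paper's exclusion of $Q_\infty(q)\le 0$ imports data from the flow ($f$, $E_f(u_0)$) that does not appear in the Lemma's statement, so the Lemma as written is not self-contained under the paper's proof but is under yours. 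What the paper's blow-up route buys in exchange is the explicit local bubble description on $R^4$, which is what feeds the finite-dimensional dynamics of Section 5 of \cite{struwe} later on. Two points in your write-up deserve a sentence of justification rather than assertion: the standard structure theorem for divergent sequences in the M\"obius group of $S^4$ (your case (b) dichotomy), and the classification of smooth solutions of $P_cv+6=6e^{4v}$ on $S^4$ as pullbacks of $c$ --- the latter is delicate because the corresponding equation on $R^4$ admits non-spherical solutions, and one must use that $v_\infty$ is globally defined and of finite energy on $S^4$ to invoke the Chang--Yang/Wei--Xu classification.
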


\begin{proof}
 The case 1) can be proved as Lemma 3.6 in \cite{struwe}. So we need only to
 prove the case 2). As in \cite{struwe}, we choose $q_l\in S^4$
 and radii $r_l>0$ such that
 $$
\sup_{q\in S^4}\int_{B(q,r_l)}|K(g_l)|dv_{g_l}\leq
\int_{B(q_l,r_l)}|K(g_l)|dv_{g_l}=2\pi^2,
 $$
where $B(q,r_l)$ is the geodesic ball in $(S^4,g_l)$. Then we have
$r_l\to 0$ and we may assume that $q_l\to q$ as $l\to\infty$. For
each $l$, we introduce $\phi_l$ as in Lemma 3.6 in \cite{struwe}
so that the functions
$$
\hat{u}_l=u_l\circ \phi_l+\frac{1}{4}\log (det(d\phi_l))
$$
satisfy the conformal Q-curvature equation
$$
-P_{R^4} \hat{u}_l=2\hat{Q}_l e^{4\hat{u}_l}, \; on \; R^4,
$$
where $\hat{Q}_l=Q(g_l)\circ\phi$ and $P_{R^4}$ is the Paneitz
operator of the standard Euclidean metric $g_{R^4}$. Note that for
$\hat{g}_l=\phi^*g_l=e^{2\hat{u}_l}g_{R^4}$, we have
$$
Vol(R^4, \hat{g}_l)=Vol(S^4,g_l)=\frac{8}{3}\pi^2.
$$
Arguing as in \cite{struwe}, we can conclude a convergent
subsequence $\hat{u}_l\to \hat{u}_{\infty}$ in $H^4_{loc}(R^4)$
where $\hat{u}_{\infty}$ satisfies the Liouville type equation
$$
-\Delta^2_{R^4} \hat{u}_{\infty}=\hat{Q}_{\infty}(q)
e^{4\hat{u}_{\infty}}, \; on \; R^4,
$$
with
$$
\int_{R^4}K_{\infty}(q) e^{4\hat{u}_{\infty}}dz\leq
\frac{8}{3}\pi^2.
$$

We only need to exclude the case when $Q_{\infty}(q)\leq 0$. Just
note that by (\ref{bound}) we have
$$
\log(1/\int_{R^4}f\circ \phi_l e^{4\hat{u}_l})\leq E_f(u_0).
$$
Hence, sending $l\to\infty$, we always have $f\circ\phi_l\to
f\circ\phi(q)>0$ uniformly on any compact domains of $R^4$.

The remaining part is the same as in the proof of Lemma 3.6 in
\cite{struwe}. We confer to \cite{struwe} for the full proof.

\end{proof}

We remark that some other argument can also exclude the case
$Q_{\infty}(q)< 0$. It can not occur since there is no such a
solution on the whole space $R^4$ (see also the argument in
\cite{ma}). If $Q_{\infty}(q)=0$, then
$\Delta_{R^4}\hat{u}:=\Delta_{R^4}\hat{u}_{\infty}$ is a harmonic
function in $R^4$. Let $\bar{u}(r)$ be the average of $u$ on the
circle $\partial B_r(0)\subset R^4$. Then we have
$$
\Delta_{R^4}^2 \bar{u}=0.
$$
Hence $\Delta_{R^4}\bar{u}=A+Br^{-2}$ for some constants $A$ and
$B$, where $r=|x|$. Since $\Delta_{R^4}\bar{u}$ is a continuous
function on $[0,\infty)$, we have $\Delta_{R^4}\bar{u}=A$, which
gives us that $$ \bar{u}=A+Br^2+Cr^{-2}
$$ for some constants $A,B, $ and $C$. Again, using $\bar{u}$ is regular, we have
$C=0$ and $\bar{u}=A+Br^2$ with $B<0$. However, it seems hard to
exclude this case without the use of the fact (\ref{bound}).

With this understanding, we can do the same finite-dimensional
dynamics analysis as in section 5 in \cite{struwe}. Then arguing
as in section 5 in \cite{struwe} we can prove Theorem \ref{main}.
By now the argument is well-known, so we omit the detail and refer
to \cite{struwe} for full discussion.


\begin{thebibliography}{20}

\bibitem{Au98}
Aubin, Thierry: \emph{Some nonlinear problems in Riemannian
geometry (Springer Monographs in Mathematics)}. Springer, Berlin
1998 MR1636569 (99i:58001)

\bibitem{ono} W.Beckner, \emph{Sharp Sobolev inequalities on the sphere and the Moser-Trudinger inequality},
 Ann. Math., 138(1993)213-242.

\bibitem{brandle}, S.Brendle, \emph{Global existence and convergence for a higher order flow
in conformal geometry}, Ann. math., 158(2003)323-343.

\bibitem{chang} S.Y.A.Chang, P.Yang, \emph{Extremal metrics of zeta function determinants on 4-manifolds},
Ann. Math., 142(1995)171-212.



\bibitem{struwe1} M.Struwe, \emph{Curvature flows on surfaces},
Annali Sc. Norm. Sup. Pisa, Ser. V, 1(2002)247-274.

\bibitem{struwe} A.Malchiodi, M.Struwe, \emph{Q-curvature flow on $S^4$}, J.Diff. Geom., 73(2006)1-44.


\bibitem{ma} L.Ma, \emph{Three remarks on mean field equations}, preprint, 2008.

\bibitem{ma2} M.C.Hong,L.Ma, \emph{Curvature flow to Nirenberg problem}, preprint, 2008.

\bibitem{Wx}, J.Wei and X.Xu, \emph{On conformal deformation of metrics on $S^n$}, J. Functional Analysis,
157(1998)292-325.
\end{thebibliography}
\end{document}